\documentclass[11pt,reqno]{amsart}

\usepackage[b5paper,top=1.2in,left=0.9in]{geometry}
\usepackage{verbatim}
\usepackage{amssymb}
\usepackage{amsmath}
\usepackage{graphicx}
\usepackage{appendix}
\usepackage{color}
\usepackage{amsthm}
\usepackage{tikz}
\usetikzlibrary{arrows,positioning,decorations.pathmorphing,  decorations.markings}

\renewcommand{\d}{\mathrm{d}}
\newcommand{\D}{\mathrm{D}}
\newcommand{\e}{\mathrm{e}}

\newtheorem{Thm}{Theorem}[section]
\newtheorem{Lem}[Thm]{Lemma}
\newtheorem{Prop}[Thm]{Proposition}

\newtheorem{Rem}[Thm]{Remark}
\newtheorem{Def}[Thm]{Definition}

\newtheoremstyle{named}{}{}{\itshape}{}{\bfseries}{.}{.5em}{#1 #3}
\theoremstyle{named}

\def\R{\mathbb{R}}

\def\C{\mathbb{C}}
\def\Z{\mathbb{Z}}

\def\H{\mathbb{H}}
\def\bbD{\mathbb{D}}
\def\M{\mathbb{M}}

\def\cL{\mathcal{L}}
\def\cM{\mathcal{M}}
\def\cN{\mathcal{N}}

\def\cT{\mathcal{T}}

\def\a{\alpha}
\def\b{\beta}
\def\c{\gamma}

\def\D{\Delta}

\def\d{\delta}
\def\e{\epsilon}

\def\h{\theta}

\def\l{\lambda}

\def\S{\Sigma}

\def\t{\tau}

\def\Up{\Upsilon}

\def\w{\omega}

\def\bD{\textbf{D}}

\def\bo{\textbf{o}}

\def\=>{\Longrightarrow}

\def\iff{\Longleftrightarrow}

\def\to{\longrightarrow}

\def\o+{\oplus}
\def\bo+{\bigoplus}
\def\x{\times}
\def\<{\langle}
\def\>{\rangle}
\def\({\left(}
\def\){\right)}
\def\oo{\infty}

\def\^{\wedge}
\def\+{\dagger}

\def\inv{^{-1}}
\def\half{\frac{1}{2}}

\def\dd[#1,#2]{\frac{d#1}{d#2}}
\def\del[#1,#2]{\frac{\partial #1}{\partial #2}}
\def\over[#1]{\overline{#1}}
\def\vec[#1]{\overrightarrow{#1}}

\def\tab{\;\;\;\;\;\;}

\newcommand{\til}[1]{\widetilde{#1}}
\newcommand{\what}[1]{\widehat{#1}}
\newcommand{\xto}[1]{\xrightarrow{#1}}

\newcommand{\veca}[2][cccccccccccccccccccccccccccccccccccccccccc]{\left(\begin{array}{#1}#2 \\ \end{array} \right)}

\newcommand{\Eq}[1]{\begin{align}#1\end{align}}
\newcommand{\Eqn}[1]{\begin{align*}#1\end{align*}}

\tikzset{->-/.style={decoration={
  markings,
  mark=at position .5 with {\arrow{latex}}},postaction={decorate}}}
\tikzset{
    %Define standard arrow tip
    >=latex
    }
\begin{document}
\title{On Ramond decorations}

\author{  Ivan C.H. Ip}

\address[Ivan C.H. Ip]{
         	   Center for the Promotion of
         	  Interdisciplinary Education and Research/\newline
     	  Department of Mathematics, Graduate School of Science,  Kyoto University, Japan
		\newline
		Email: ivan.ip@math.kyoto-u.ac.jp
		\newline URL: http://math.kyoto-u.ac.jp/\textasciitilde ivan.ip/
            }

\author{ Robert C. Penner}
\address[Robert C. Penner]{
Institute des Hautes \'Etudes Scientifiques, Bures-sur-Yvette, France; University of California--Los Angeles, Los Angeles, USA 
\newline 
Email: rpenner@ihes.fr}  
\author{Anton M. Zeitlin}
\address[Anton M. Zeitlin]{
          Department of Mathematics, Louisiana State University, Baton Rouge, USA; IPME RAS, St. Petersburg
          \newline
          Email: anton.zeitlin@gmail.com
          \newline URL: http://math.lsu.edu/\textasciitilde zeitlin/}

\date{\today}

\numberwithin{equation}{section}

\maketitle

\begin{abstract}
We impose constraints on the odd coordinates of super Teichm\"uller space in the uniformization picture for the monodromies around Ramond punctures, thus reducing the overall odd dimension to be compatible with that of the moduli spaces of super Riemann surfaces. Namely, the monodromy of
a puncture must be a true parabolic element of the canonical
subgroup $PSL(2,\mathbb{R})$.
\end{abstract}

%==============================================================================
\section{Introduction: super Riemann surfaces, punctures and uniformization}\label{sec:intro}

Moduli spaces of complex supermanifolds of dimension $(1|1)$ and the subclass of those called super Riemann surfaces,  are the cornerstones of superstring theory (for a review see \cite{donagi,witten} as well as the original papers \cite{CR, schwarz}). 

In our previous work \cite{PZ,IPZ}, we developed a coordinate system for super Teichm\"uller space associated to super Riemann surfaces with punctures, based upon uniformization taking Poincar\'e metrics. These are generalizations of standard Penner coordinates \cite{penner,pbook} on the Teichm\"uller space. Each such super Riemann surface is defined in this uniformization approach as a factor $\what{\H}^+/\Gamma$ of a super analogue $\what{\H}^+$ of the upper half-plane $\H^+$, modulo the action of a discrete Fuchsian subgroup $\Gamma$ of $OSp(1|2)$ acting on $\what{\H}^+$ as a superconformal transformation, which is a certain generalization of a standard fractional-linear transformation, see e.g. \cite{Na}. 

The super Teichm\"uller space $S\mathcal{T}(F)$, where $F$ is the underlying Riemann surface of genus $g$ with $s$ punctures, is defined in full analogy with standard pure even case, viewed as a character variety:  
\Eq{S\mathcal{T}(F)=\mathrm{Hom}'(\pi_1(F)\to OSp(1|2))/OSp(1|2),} so that $\Gamma$ above belongs to the image. Here $\pi_1(F)$ is the fundamental group of the underlying Riemann surface with punctures, and $\mathrm{Hom}'$ in \cite{PZ}, \cite{IPZ} stands for the homomorphisms which map elements of $\pi_1(F)$ corresponding to small loops around the punctures, to parabolic elements of $OSp(1|2)$, which means that their natural projections to $PSL(2,\R)$ are parabolic elements. 

It turns out that the dimension of the resulting space is $(6g-6+2s|4g-4+2s)$. 
However, it is known from the study of moduli space $\cM(\S)$ of a super Riemann surface $\S$ that its dimension is a little more subtle, namely it is 
\Eq{\dim \cM(\S)=(6g-6+2s|4g-4+2n_{NS}+n_R),} where $n_{NS}$ and $n_{R}$ are known as the number of Neveu-Schwarz (NS) and Ramond punctures on $\S$ respectively. These punctures are the analogs of punctures or marked points on an ordinary Riemann surface, in which on super Riemann surface they are described by codimension $(1|0)$ divisors.

Let us look at these classes of punctures in detail (we refer to \cite{witten} for more information) from a point of view of complex supermanifolds and compare it to the uniformization approach to understand where this discrepancy in dimension count comes from. 

We recall (see e.g. \cite{witten}) that a \emph{super Riemann surface} $\S$ is a $(1|1)$-dimensional complex supermanifold, i.e. locally isomorphic to $\C^{1|1}$, together with a subbundle $\mathcal{D}\subset T\S$ of rank $(0|1)$, such that if $D$ is its nonzero section (in some open set $U\subset \S$), $D^2=\half\{D,D\}$ is nowhere proportional to $D$. The local coordinates $(z, \theta)\in\C^{1|1}$, where $z$ is even and $\h$ is odd, such that \Eq{
D=D_\h:=\partial_{\theta}+\theta\partial_z,} are known as \emph{superconformal coordinates} and can be chosen for any such nonzero section $D$. The \emph{superconformal transformations} operating between patches are the ones preserving $\mathcal{D}$. 

The \emph{NS puncture} is a natural generalization of the puncture of ordinary Riemann surface, and can be considered as any point $(z_0,\theta_0)$ on the super Riemann surface. Locally one can associate to it a $(0|1)$-dimensional divisor of the form $z=z_0-\theta_0\theta$, which is the orbit with respect to the action of the group generated by $D$, and this divisor uniquely determine the point $(z_0,\theta_0)$ due to the superconformal structure.

Let us consider the case when the puncture is at $(0,0)$ locally. In its neighborhood let us pick a coordinate transformation 
\Eq{
z=e^{w}, \tab \theta= e^{w/2}\eta, \label{trans1}} such that the neighborhood (without the puncture) is mapped to a \emph{supertube} with $w$ sitting on a cylinder $w \sim w+2\pi i$, and $D_\h$ becomes
\Eq{
D_{\theta}=e^{-w/2}(\partial_{\eta}+\eta\partial_{w}).
} Hence $(w,\eta)$ are superconformal coordinates, and we have the full equivalence relation given by \Eq{w \sim w+2\pi i, \tab \eta\to -\eta.\label{eq1}}

Therefore, in the uniformization picture, the group element, corresponding to the monodromy around NS puncture should be conjugate to an element of $OSp(1|2)$ corresponding to fractional linear transformation representing translation and an odd element reflection. In other words, an element of a Borel subgroup of $OSp(1|2)$ generated by the maximal negative root (which is an $SL(2,\R)$ generator), accompanied by a fermionic reflection.  

The case of \emph{Ramond puncture} is a whole different story. On the level of super Riemann surfaces, the associated divisor is determined as follows. In this case, we are looking at the 
case when the condition that  $D^2$ is linearly independent of $D$ is violated along some $(0|1)$ divisor. Namely, in some local coordinates $(z,\theta)$ near the Ramond puncture with coordinates $(0,0)$, $\mathcal{D}$ has a section of 
the form 
\Eq{D=\partial_\theta+z\theta \partial_z.}
We see that its square vanishes along the $Ramond$ $divisor$ $z=0$. One can map the neighborhood patch to the supertube using a different coordinate transformation 
\Eq{z=e^{w},\tab \theta=\eta\label{trans2},} those coordinates on the supertube will be superconformal, since \Eq{
D=\partial_{\eta}+\eta\partial_{w}.} Notice that the identifications we have to impose on $(w, \eta)$ now become: \Eq{w \sim w+2\pi i,\tab \eta\to+\eta.\label{eq2}} Again, we see that the group element corresponding to monodromy around the loop should belong to the same subgroup in the Borel subgroup as in the NS case, just without extra reflection.

What lesson did we learn from this discussion? Previously, in \cite{PZ,IPZ} we obtained a bigger Teichm\"uller space than needed for the study of supermoduli, since  the constraint we imposed on $\Gamma$ with respect to the monodromies around punctures was too weak. The condition which is needed to reduce dimension appropriately has to be the following: {\it group elements, corresponding to the monodromies around punctures, should lie in the conjugacy classes of parabolic elements of the canonical $SL(2,\R)$ subgroup of $OSp(1|2)$}. This constraint will remove the necessary $(0|n_{R})$-dimensional bundle from $S\mathcal{T}(F)$, which we will call \emph{Ramond decoration}.

The main goal of this note is to express this constraint in terms of coordinates obtained in \cite{PZ,IPZ} and we will see that it is indeed an elegant formula, linear in odd variables. 

The structure of the paper is as follows. In Section \ref{sec:coord} we review construction of \cite{PZ,IPZ}.  In Section \ref{sec:frac}, we recall the representation of $OSp(1|2)$ as fractional linear transformation. Section \ref{sec:mono} is devoted to the explicit study of the monodromy around the punctures and, finally in Section \ref{sec:dim} we derive the formula for the aforementioned constraint.

%==========================================================
\section{Coordinates of super Teichm\"uller space}\label{sec:coord}
Let us first recall the ingredients used to construct the super Teichm\"uller space for the case $\cN=1$. We will adapt the notation and construction from \cite{IPZ} for the case $\cN=2$ which restricts to the case $\cN=1$.

\subsection{Definition of $OSp(1|2)$}
Let the supergroup $SL(1|2)$ be $(2|1)\x (2|1)$ supermatrices with superdeterminant equal to 1, where we write
\Eq{g=\veca{a&\a&b\\\c&f&\b\\c&\d&d}\in SL(1|2)}
such that $a,b,c,d,f$ are even entries, and $\a,\b,\c,\d$ are odd entries, with the supernumber defined over $\R$. We will consider the component $SL(1|2)_0$ where $f>0$. The \emph{superdeterminant} or \emph{Berezinian} is defined to be
\Eq{sdet(g):=f\inv \det\left(\veca{a&b\\c&d}+f\inv \veca{\a\c&\a\d\\ \b\c&\b\d}
\right),}
while the \emph{supertrace} is given by 
\Eq{str(g):=a+d-f.}
Let us denote by 
\Eq{
J:=\veca{0&0&1\\0&1&0\\-1&0&0}}
with $sdet(J)=1$, and define the \emph{supertranspose} as
\Eq{
g^{st}:=\veca{a&\c&c\\-\a&f&-\d\\b&\b&d}.}
Then the supergroup $OSp(1|2)\subset SL(1|2)_0$ is defined to be the supermatrices $g\in SL(1|2)_0$ satisfying
\Eq{
g^{st}J g = J.}
We have a natural projection $SL(1|2)_0\to SL(2,\R)$ given by
\Eq{g=\veca{a&\a&b\\\c&f&\b\\c&\d&d}\mapsto \frac{1}{\sqrt{f_\#}}\veca{a_\#&b_\#\\c_\#&d_\#},}
where $a_\#$ denotes the {\it body} of the supernumber $a$.

Finally we denote two special types of matrices in $OSp(1|2)$ by
\Eq{
D_a:=\veca{a&0&0\\0&1&0\\0&0&a\inv}, \tab Z_a:=\veca{a&0&0\\0&a^2&0\\0&0&a}
}
that will be useful later on. The matrix $Z_{-1}$ is also known as the \emph{fermionic reflection}.
\subsection{Decorated super Teichm\"uller space}
Let $F:=F_g^s$ be a Riemann surface with genus $g\geq 0$ and $s\geq 1$ punctures such that $2g+s-2>0$. Let $\D$ be an ideal triangulation of $F$ whose vertices are lifted to the set of vertices $\til{\D}_\oo$ at infinity on the universal cover $\til{F}\simeq \bbD$ where $\bbD$ is the Poincar\'e unit disk, and $\pi_1(F)$ acts on $\bbD$ by the Deck transformations.

The coordinates of the decorated super Teichm\"uller space $S\til{\cT}(F)$ can be realized in Minkowski space $\M=\R^{2,1|2}$ with inner product between two vectors $A=(x_1,x_2,y|\phi,\h),A'=(x_1',x_2',y'|\phi',\h')$ in $\M$ given by
\Eq{
\<A,A'\>:=\frac12(x_1x_2'+x_1'x_2)-yy'+\phi\h'+\phi'\h,}
where the square-root of such inner product is called a \emph{$\l$-length}. We define the \emph{positive light cone} to be
\Eq{
\cL:=\{ A\in \M: \<A,A\>=0,  x_1>0, x_2>0\}.}
Any point in $\M$ can also be represented as 
\Eq{
M_c=\veca{x_1&\phi&y-c\\-\phi&c&\-\h\\ y+c&\h&x_2}\in \M,}
where $\cL$ corresponds to the subspace with $c=0$. Then $OSp(1|2)$ acts naturally on $\M$ and $\cL$ by the adjoint action
\Eq{g\cdot M_c:= g^{st} M_c g,\tab g\in OSp(1|2),} and every vectors in the light cone $\cL$ can be put into the form ${e_\h:=(1,0,0|0,\pm\h)\in \cL}$ for some odd parameter $\h$. The $OSp(1|2)$-orbit of
$$e_0:=(1,0,0|0,0)\in \cL,$$
is of special importance, and we will refer to
\Eq{\cL_0:=OSp(1|2)\cdot e_0\subset \cL}
as the \emph{special light cone}.

\begin{Lem}\cite{PZ} Let $\D ABC$ be a \emph{positive triple} (i.e. the bosonic part of $(A,B,C)$ are positively oriented) in the special light cone $\cL_0$. Then there is a unique $g\in OSp(1|2)$ (up to composition by the fermionic reflection $Z_{-1}$), unique even $r,s,t>0$, and odd $\phi$ such that
\Eq{g\cdot A=r(0,1,0|0,0),\tab g\cdot B=t(1,1,1|\phi,\phi),\tab g\cdot C=s(1,0,0|0,0).
\label{standpos}}
\end{Lem}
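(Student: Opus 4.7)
I would normalize the triple $(A,B,C)\in\cL_0^3$ in three successive stages via the $OSp(1|2)$-action, in analogy with the classical fact that $PSL(2,\R)$ acts simply transitively on ordered positively-oriented triples of distinct points on $\partial\H^+$. The positive scales $r,s,t$ and the odd parameter $\phi$ then emerge as the invariants of the orbit.

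\textbf{Steps.} In Step~1, since $C\in\cL_0=OSp(1|2)\cdot e_0$, an element of $OSp(1|2)$ followed by the Cartan scaling $D_a$ (which acts on $e_0$ by $a^2$) produces $g_1\in OSp(1|2)$ with $g_1\cdot C = s(1,0,0|0,0)$ for any prescribed $s>0$, to be pinned down later. In Step~2, positivity of the triple forces $A':=g_1\cdot A$ off the ray $\R_{>0}\cdot e_0$, so one may apply an element $g_2$ of the super-parabolic subgroup $P\subset OSp(1|2)$ stabilizing that ray to bring $A'$ to the form $r(0,1,0|0,0)$ for a unique $r>0$; here the bosonic part of $P$ is the Borel of $SL(2,\R)$ fixing $\infty\in\partial\H^+$, and a dimension count together with an explicit description shows that $P$ acts transitively on $\cL_0\setminus\R_{>0}\cdot e_0$ modulo $Z_{-1}$. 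In Step~3, the remaining continuous stabilizer of the pair $\bigl(s(1,0,0|0,0),\,r(0,1,0|0,0)\bigr)$ is the connected Cartan $\{D_a:a>0\}$, which rescales $C$ by $a^2$ and $A$ by $a^{-2}$; writing the bosonic part of $B'':=g_2g_1\cdot B$ as $(b_1,b_2,y)$, the light-cone condition $b_1 b_2 = y^2$ determines a unique $a>0$ with $a^2=y/b_1=b_2/y$ so that $D_a$ sends this bosonic part to $(t,t,t)$ with $t=y$, thereby simultaneously pinning down the final $r,s,t$. Writing the odd part of the resulting $B$ as $(\phi_1,\phi_2)$, the hypothesis $B\in\cL_0$ should force $\phi_1=\phi_2=:\phi$. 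Uniqueness of $g$ up to $Z_{-1}$ then follows because the stabilizer of the full standard triple in $OSp(1|2)$ equals $\{I,Z_{-1}\}$: preserving two distinct light rays forces the element into the Cartan, and then preserving $t(1,1,1|\phi,\phi)$ forces $a^2=1$.

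\textbf{Main obstacle.} The crucial technical point is the claim that after normalization the two odd coordinates of $B$ must coincide; equivalently, that the hypothesis $B\in\cL_0$ (rather than only $B\in\cL$) translates, in the standard frame, into the equality $\phi_1=\phi_2$. The special light cone $\cL_0\subset\cL$ is cut out by the vanishing of a single odd invariant whose value on the representatives $e_\eta=(1,0,0|0,\pm\eta)$ is $\eta$; identifying this invariant explicitly as an $OSp(1|2)$-equivariant functional on $\cL$ and checking that, once the bosonic part of $B$ and the positions of $A$ and $C$ are fixed, it reduces precisely to a multiple of $\phi_2-\phi_1$, is where the substantive work of the proof lies.
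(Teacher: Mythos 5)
This lemma is not proved in the paper at all---it is imported from \cite{PZ}, where it is established by exactly the kind of explicit normalization (``basic calculation'') you outline, so your three-stage reduction is the right strategy and essentially the standard one. However, as written your argument has a genuine gap exactly at the point you yourself flag: the claim that $B\in\cL_0$ forces the two odd coordinates of the normalized $B$ to coincide is asserted (``should force''), not proved, and without it the proof is incomplete, since membership in $\cL$ alone with bosonic part $(t,t,t)$ only gives $\phi_1\phi_2=0$, which does not imply $\phi_1=\phi_2$. The same remark applies to your unproved Step-2 claim that the ray-stabilizing parabolic acts transitively on $\cL_0\setminus\R_{>0}e_0$ modulo $Z_{-1}$. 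Both gaps are closed by one short computation that you should make explicit: for $g\in OSp(1|2)$ with first row $(a,\alpha,b)$ one finds $g\cdot e_0=g^{st}E_{11}g=(a^2,\,b^2,\,ab\,|\,a\alpha,\,b\alpha)$, so every point of $\cL_0$ has odd part equal to a single odd parameter times $(\sqrt{x_1},\sqrt{x_2})$; equivalently, $\cL_0$ is cut out inside $\cL$ by the odd invariant $\sqrt{x_2}\,\phi-\sqrt{x_1}\,\theta$ (up to sign). This parametrization gives the transitivity statement needed in Step 2, and at bosonic part $(t,t,t)$ it forces $a=b=\pm\sqrt{t}$ and hence odd part $\pm\sqrt{t}\,(\alpha,\alpha)$, i.e.\ $\phi_1=\phi_2$ as required.

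Two smaller points. First, in Step 2 the value $r$ is \emph{not} unique at that stage if $P$ stabilizes only the ray $\R_{>0}e_0$ (you can send $A'$ to $r(0,1,0|0,0)$ for any $r>0$); $r,s,t$ are pinned down only at the end, most cleanly by the invariants $\langle A,C\rangle=rs/2$, $\langle A,B\rangle=rt/2$, $\langle B,C\rangle=st/2$. Positivity of the triple is also needed in Step 3, not just Step 2, to guarantee that the middle coordinate $y$ of $B$ is positive so that $t>0$ is attainable. Second, for uniqueness you must rule out an element carrying one standard triple to a standard triple with \emph{different} parameters, not merely compute the stabilizer of a fixed standard triple; this follows from the same computation (preserving the two rays forces a Cartan element $D_a$ up to $Z_{-1}$, and matching the bosonic part of $B$ forces $a^2=1$), and one should note that composing with $Z_{-1}$ flips $\phi\mapsto-\phi$, so $\phi$ is unique only up to sign, consistent with the ``up to $Z_{-1}$'' in the statement.
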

Coordinates of the form \eqref{standpos} are said to be in \emph{standard position}. We then have two associated transformations:
\begin{Lem}The \emph{prime transformations} $P_\h^\pm$ are given by
\Eq{P_\h^+:=\veca{-1&\h&1\\-\h&1&0\\-1&0&0},\tab P_\h^-:=(P_\h^+)\inv =\veca{0&0&-1\\0&1&-\h\\1&-\h&-1},}
which rotates the standard position with odd parameters $\h$, from $\D ABC$ to $\D BCA$ and $\D CAB$ respectively.

The \emph{upside-down} transformations $\Up^\chi$ are given by
\Eq{\Up^{\chi}:=J\circ D_{\sqrt{\chi}},
}
which sends the standard position from $\D ABC$ to $\D CDA$ in a quadrilateral $\diamondsuit ABCD$ with \emph{cross ratio}
\Eq{
\chi:=\frac{ac}{bd},}
where $a^2=\<A,B\>, b^2=\<B,C\>, c^2=\<C,D\>$ and $d^2=\<D,A\>$.
\end{Lem}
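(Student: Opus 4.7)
The proof is a direct verification: use the adjoint action $g\cdot M=g^{st}Mg$ on matrix representatives of light-cone vectors and check that the listed supermatrices carry the stated triples of vertices to the required standard positions after relabeling.

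First I would confirm that $P_\h^\pm$ and $\Up^\chi$ actually lie in $OSp(1|2)$ by verifying $g^{st}Jg=J$ and $sdet(g)=1$; this is a routine $3\times 3$ supermatrix multiplication, substantially shortened by $\h^2=0$. The same kind of computation also checks the asserted identity $P_\h^-=(P_\h^+)\inv$.

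Next, for the prime transformations, I would compute $P_\h^+\cdot A$, $P_\h^+\cdot B$, $P_\h^+\cdot C$ on the vertices $A=r(0,1,0|0,0)$, $B=t(1,1,1|\phi,\phi)$, $C=s(1,0,0|0,0)$ in standard position. The goal is to show that $P_\h^+\cdot B$ is proportional to $(0,1,0|0,0)$, that $P_\h^+\cdot C$ is proportional to $(1,1,1|\phi',\phi')$ for an induced odd parameter $\phi'=\phi'(\phi,\h)$, and that $P_\h^+\cdot A$ is proportional to $(1,0,0|0,0)$; these three identities are exactly the statement that the image realizes the standard position of the cycled triangle $\D BCA$. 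The case of $P_\h^-$ is established by an analogous direct computation.

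For the upside-down map $\Up^\chi=J\circ D_{\sqrt{\chi}}$ the same strategy applies inside the quadrilateral $\diamondsuit ABCD$, but one must first produce the standard-position expression of the fourth vertex $D$ from the pairings $\<C,D\>=c^2$, $\<D,A\>=d^2$, $\<D,D\>=0$ together with $D\in\cL_0$. The cross-ratio $\chi=ac/(bd)$ then enters precisely as the rescaling needed so that, after the dilation by $D_{\sqrt{\chi}}$, the matrix $J$ maps the triple $(C,D,A)$ onto a standard position for $\D CDA$. This is the super-analogue of the classical Ptolemy identity underlying the flip in Penner coordinates.

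The principal obstacle is careful bookkeeping: the super-transpose introduces minus signs in the odd block which must be tracked through two matrix products, and the induced odd parameter $\phi'$ of the new standard position depends on $\phi$ and $\h$ in a way that only emerges from the computation. For $\Up^\chi$, the extra task of making the fourth vertex $D$ explicit is what ultimately pins down the correct cross-ratio factor $\sqrt{\chi}$ in the dilation.
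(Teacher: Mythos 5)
Your proposal is correct and is essentially the argument the paper has in mind: this lemma is quoted from \cite{PZ}, and the present paper reduces its justification to exactly the routine check you describe (``one readily checks'' membership in $OSp(1|2)$, plus the adjoint-action computation $g\cdot M=g^{st}Mg$ on the standard-position vertices, with the fourth vertex $D$ recovered from the prescribed $\l$-lengths up to its odd invariant). No genuinely different route is taken in the paper, so there is nothing further to compare.
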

One readily checks that $P^{\h,\pm}$ and $\Up^\chi$ are all elements in $OSp(1|2)$.
\begin{Def} The \emph{decorated super Teichm\"uller space} $S\til{\cT}(F)$ is the space of $OSp(1|2)$-orbits of lifts 
\Eq{\ell:\til{\D}_\oo\to\cL_0,} which are $\pi_1$-equivariant for some super Fuchsian representation $\what{\rho}:\pi_1\to OSp(1|2)$. More precisely,
\begin{itemize}
\item[(1)] $\what{\rho}(\c)(\ell(a)) = \ell(\c(a))$ for each $\c\in \pi_1$ and $a\in \til{\D}_\oo$;
\item[(2)] the natural projection
\Eq{
\rho:\pi_1\xto{\what{\rho}} OSp(1|2)\to SL(2,\R)\to PSL(2,\R)}
is a Fuchsian representation.
\end{itemize}
\end{Def}
In \cite{PZ}, the space of all such lifts are constructed by a recursive procedure to lift the ideal triangles of $\til{F}$ on the universal cover to the light cone, using the ``basic calculations" to determine the $OSp(1|2)$-orbits on the light cone. 
The construction is subsequently simplified and generalized to $\cN=2$ in \cite{IPZ}, where  the construction of the lift was directly connected to the combinatorial description of spin structures, discovered in \cite{PZ}. There, the spin structures were identified  with classes of orientations of the trivalent fatgraph spine $\tau$, dual 
to the triangulation $\Delta$: two orientations belong to the same equivalence class if they are related by sequence of reversals of orientation of all edges incident to a given vertex. It was shown in \cite{PZ}, that under the elementary Whitehead move (flip), the orientations change as in Figure \ref{flipgraphint} in the generic situation. 
\begin{figure}[h!]
\begin{center}
\begin{tikzpicture}[ ultra thick, baseline=1cm]
\draw (0,0)--(210:1) node[above] at (210:0.7){$\epsilon_2$};
\draw (0,0)--(330:1) node[above] at (330:0.7){$\epsilon_4$};
    % draw the connecting line
    \draw[ 
 	ultra thick,
        decoration={markings, mark=at position 0.5 with {\arrow{>}}},
        postaction={decorate}
        ]
        (0,0) -- (0,2);
\draw[yshift=2cm] (0,0)--(30:1) node[below] at (30:0.7){$\epsilon_3$};
\draw[yshift=2cm] (0,0)--(150:1) node[below] at (150:0.7){$\epsilon_1$};
\end{tikzpicture}
\begin{tikzpicture}[baseline]
\draw[->, thick](0,0)--(2,0);
\node[above] at (1,0) {};
\node at (-1,0){};
\node at (3,0){};
\end{tikzpicture}
\begin{tikzpicture}[ultra thick, baseline]
\draw (0,0)--(120:1) node[above] at (100:0.3){$\epsilon_1$};
\draw (0,0)--(240:1) node[below] at (260:0.3){$\epsilon_2$};
    % draw the connecting line
    \draw[ 
        decoration={markings, mark=at position 0.5 with {\arrow{<}}},
        postaction={decorate}
        ]
        (0,0) -- (2,0);
\draw[xshift=2cm] (0,0)--(60:1) node[above] at (100:0.3){$-\epsilon_3$};
\draw [xshift=2cm](0,0)--(-60:1) node[below] at (-80:0.3){$\epsilon_4$};
\end{tikzpicture}
\end{center}
\caption{Spin graph evolution in the generic situation}
\label{flipgraphint}
\end{figure}
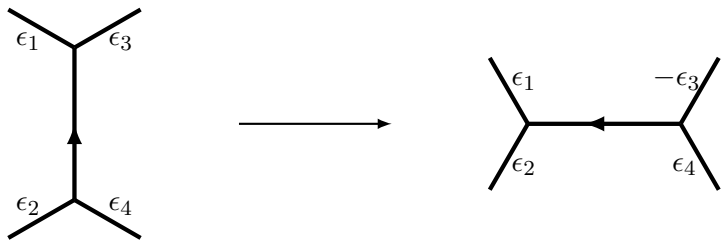
There $\epsilon_i$ stands for the orientation of the corresponding edge and negative sign represents orientation reversal.

It was also explicitly shown that this description of  spin structures is compatible with Natazon's description \cite{Na} of a choice of lift $\til{\rho}:\pi_1\to OSp(1|2)\to SL(2,\R)$ of the Fuchsian representation $\rho$.

Altogether this leads to the following description of the coordinates on $S\til{\cT}(F)$:
\begin{Thm}
i) The components of $S\til{\cT}(F)$ is determined by the space of spin structures on $F$. To each component $C$ of $S\til{\cT}(F)$, there are global affine coordinates on $C$ given by assigning to a triangulation $\D$ of $F$, 
\begin{itemize}
\item one even coordinate called $\l$-length for each edge; 
\item one odd coordinate called $\mu$-invariant for each triangle, taken modulo an overall change of sign.
\end{itemize} In particular we have a real-analytic homeomorphism
\Eq{C\to \R_{>0}^{6g-6+3s|4g-4+2s}/\Z_2.}
ii) The super Ptolemy transformations \cite{PZ} provide the analytic relations between coordinates assigned to different choice of triangulation $\D'$ of $F$, namely upon flip transformation. Explicitly (see Figure \ref{ptolemy}),  when all $a,b,c,d$ are different edges of the triangulations of $F$, Ptolemy transformations are as follows:

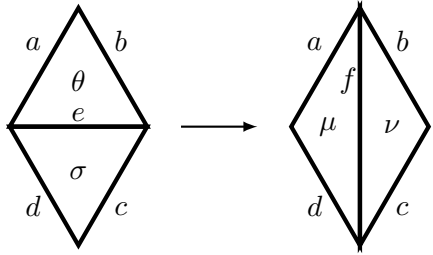
\begin{figure}[h!]

\centering

\begin{tikzpicture}[scale=0.6, baseline,ultra thick]

\draw (0,0)--(3,0)--(60:3)--cycle;

\draw (0,0)--(3,0)--(-60:3)--cycle;

\draw node[above] at (70:1.5){$a$};

\draw node[above] at (30:2.8){$b$};

\draw node[below] at (-30:2.8){$c$};

\draw node[below=-0.1] at (-70:1.5){$d$};

\draw node[above] at (1.5,-0.15){$e$};

\draw node[left] at (0,0) {};

\draw node[above] at (60:3) {};

\draw node[right] at (3,0) {};

\draw node[below] at (-60:3) {};

\draw node at (1.5,1){$\theta$};

\draw node at (1.5,-1){$\sigma$};

\end{tikzpicture}
\begin{tikzpicture}[baseline]

\draw[->, thick](0,0)--(1,0);

\node[above]  at (0.5,0) {};

\end{tikzpicture}
\begin{tikzpicture}[scale=0.6, baseline,ultra thick]

\draw (0,0)--(60:3)--(-60:3)--cycle;

\draw (3,0)--(60:3)--(-60:3)--cycle;

\draw node[above] at (70:1.5){$a$};

\draw node[above] at (30:2.8){$b$};

\draw node[below] at (-30:2.8){$c$};

\draw node[below=-0.1] at (-70:1.5){$d$};

\draw node[left] at (1.7,1){$f$};

\draw node[left] at (0,0) {};

\draw node[above] at (60:3) {};

\draw node[right] at (3,0) {};

\draw node[below] at (-60:3) {};

\draw node at (0.8,0){$\mu$};

\draw node at (2.2,0){$\nu$};

\end{tikzpicture}\\
\caption{Generic flip transformation}
\label{ptolemy}
\end{figure}

  %%%%%%%%  %%%%%%%%  %%%%%%%%

\begin{eqnarray}
&& ef=(ac+bd)\Big(1+\frac{\sigma\theta\sqrt{\chi}}{1+\chi}\Big),\nonumber\\
&&\nu=\frac{\sigma+\theta\sqrt{\chi}}{\sqrt{1+\chi}},\quad
\mu=\frac{\sigma\sqrt{\chi}-\theta}{\sqrt{1+\chi}},
\end{eqnarray}
where $\chi=\frac{ac}{bd}$,  so that the evolution of arrows is as in  Figure \ref{flipgraphint}.
\end{Thm}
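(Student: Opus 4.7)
The plan is to build everything from the normal-form Lemma giving standard positions on the special light cone $\cL_0$. First I would fix a component $C \subset S\til{\cT}(F)$, i.e.\ a choice of spin structure, and show that picking a triangulation $\D$ together with a compatible orientation on the dual fatgraph spine $\tau$ (taken modulo the vertex-wise reversal equivalence established in \cite{PZ}) uniquely determines a $\pi_1$-equivariant lift $\ell:\til{\D}_\oo\to\cL_0$ up to $OSp(1|2)$-orbit. The construction is inductive: fix one triangle $\D ABC$ of $\til{\D}$ and use the standard-position Lemma to send $(A,B,C)$ to $r(0,1,0|0,0)$, $t(1,1,1|\phi,\phi)$, $s(1,0,0|0,0)$; this produces the three $\l$-lengths $\sqrt{rt},\sqrt{ts},\sqrt{rs}$ on the edges and the odd parameter $\phi$ (the $\mu$-invariant) attached to that triangle. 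Adjacent triangles are then filled in by successive applications of the prime transformations $P^{\pm}_\h$ and the upside-down transformation $\Up^\chi$; the ambiguity by the fermionic reflection $Z_{-1}$ in the standard-position Lemma is exactly the overall sign ambiguity on the collection of $\mu$'s, accounting for the $\Z_2$ quotient.

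For the enumeration of components, I would invoke the identification of combinatorial spin structures with equivalence classes of orientations of $\tau$, together with Natanzon's lift $\til{\rho}:\pi_1\to SL(2,\R)$ of the Fuchsian representation; the compatibility of these two descriptions was proved in \cite{PZ} and reused in \cite{IPZ}. Putting together the coordinate count -- one $\l$-length per edge, $6g-6+3s$ edges in $\D$, plus one $\mu$-invariant per triangle, $4g-4+2s$ triangles -- yields the claimed model $\R_{>0}^{6g-6+3s|4g-4+2s}/\Z_2$, and the real-analyticity of the homeomorphism follows from the fact that each inductive step is a polynomial operation over the $OSp(1|2)$ entries.

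For part (ii), I would derive the super Ptolemy formulas by an explicit calculation in standard position for the quadrilateral $\diamondsuit ABCD$. Fix the diagonal $AC$ and put $\D ABC$ in standard position with odd parameter $\theta$, so that in these coordinates $A,B,C$ have the prescribed form and $D$ is obtained by an upside-down transformation $\Up^\chi$ applied to $\D CAB$ (composed with a prime transformation carrying the $\sigma$-parameter of the second triangle). Compute $D$ as a vector in $\cL_0$, read off its $\l$-length to $B$, which gives $f$; the cross-ratio factor $\chi = ac/(bd)$ and the factor $1+\tfrac{\s\h\sqrt{\chi}}{1+\chi}$ will appear naturally from expanding $\<B,D\>$ in terms of the normalized coordinates. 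The new odd parameters $\mu,\nu$ in the flipped triangulation are then identified by putting $\D ABD$ and $\D BCD$ back into standard position and reading off the odd entries, which produces the stated formulas $\nu=(\s+\h\sqrt\chi)/\sqrt{1+\chi}$ and $\mu=(\s\sqrt\chi-\h)/\sqrt{1+\chi}$. Finally, the prescription of sign changes for $\epsilon_i$ in Figure~\ref{flipgraphint} is dictated by tracking the $Z_{-1}$ ambiguity through the composition of prime and upside-down moves.

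The main obstacle is bookkeeping rather than conceptual: one must verify that the inductive lift extends consistently across every triangulation and every $\pi_1$-orbit, and that changing triangulation by a single flip produces exactly the advertised Ptolemy rule together with the correct spin-orientation update. The two potentially delicate points are (a) showing that the $\Z_2$-quotient from $Z_{-1}$ is precisely the one that matches the spin-structure equivalence on oriented spines, so that $C$ is a single smooth component and not a double cover, and (b) handling the non-generic flips where some edges of $\diamondsuit ABCD$ are identified in $F$, where the formula must still hold but the labelling of $\epsilon_i$ requires care; both are carried out in \cite{PZ,IPZ} and imported here.
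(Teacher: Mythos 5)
Your outline follows essentially the same route as the paper and its sources: the theorem is quoted in the paper without an independent proof, and the machinery you use---the standard-position lemma on $\cL_0$, the prime and upside-down transformations, the fermionic-reflection ambiguity giving the $\Z_2$ quotient, and the identification of spin structures with orientation classes of the fatgraph spine---is precisely the toolkit summarized in Section \ref{sec:coord} and carried out in detail in \cite{PZ,IPZ}. Apart from harmless normalization details (with the paper's inner product the $\l$-lengths in standard position come out as $\sqrt{rs/2}$, $\sqrt{rt/2}$, $\sqrt{st/2}$ rather than $\sqrt{rs}$, etc.), your reconstruction is correct in outline and matches the cited argument.
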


The decorated super Teichm\"uller space $S\til{\cT}(F)$ is naturally a principal $\R_+^s$-bundle over the super Teichm\"uller space $S\cT(F)$ defined in the introduction, and one can descend to $S\cT(F)$ by taking the appropriate shear coordinates around punctures as in the bosonic case \cite{penner}. 
\subsection{Construction of the lift}
In this section we recall the construction of the lift $\what{\rho}:\pi_1(F)\to OSp(1|2)$ given by the coordinates of the decorated Teichm\"uller space described in \cite{PZ, IPZ}. Let us fix a spin structure $\w$ corresponding to the component of this lift, which is represented by an orientation of the fatgraph spine $\t$ of the triangulation. 

The fundamental domain $\bD\subset\til{F}$ on the universal cover of $F$ is naturally a $(4g+2s)$-gon. It suffices to determine the image of the generators $\c_i\in \pi_1(F)$, $i=1,...,2g+s$, which identifies a pair of frontier edges $c_i,c_i'$ of $\bD$. Let $c_i'=\c_i(c_i)$. To determine the image of $\what{\rho}(\c_i)\in OSp(1|2)$, let $\D ABC$ and $\D A'B'C'$ be the lift of the unique pair of triangles such that $BC=\ell(c_i), B'C'=\ell(c_i'), \ell\inv (\D ABC)\subset \bD$, and $\ell\inv(\D A'B'C')\not\subset \bD$. Then by definition of $\ell$ there is a unique transformation $g\in OSp(1|2)$ bringing the standard position from $\D ABC$ to $\D A'B'C'$ and matching $BC$ to $B'C'$. Explicitly let $\c_i$ be homotopically represented by a path in $\t$. Then 
\begin{Prop}
The image $\what{\rho}(\c_i):=g\in OSp(1|2)$ is a composition of the form
\Eq{
\what{\rho}(\c_i)=\prod_k Z_{\e_k}\circ \Up^{\chi_k}\circ P_{\h_k}^{\pm}\in OSp(1|2),}
where
\begin{itemize}
\item $\e_k\in\{1,-1\}$ according to whether the segment of $\c_i$ is aligned with the orientation of $w$.
\item $\Up^{\chi_k}$ is the upside-down transformation corresponding to the pair of triangles crossing the segment of $\c_i$.
\item $P_{\h_k}^{\pm}$ is the prime transformation corresponding to $\c_i$ turning left ($+$) or right ($-$) at the vertex of $\t$.
\end{itemize}
\end{Prop}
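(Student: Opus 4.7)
The plan is to prove this by induction on the combinatorial length of the homotopy representative of $\c_i$ in the fatgraph spine $\t$. The idea is to decompose the path into elementary moves — each consisting of a turn at a vertex of $\t$, the traversal of an edge of $\t$, and a spin-structure sign — that correspond one-for-one with the three factors $P_{\h_k}^{\pm}$, $\Up^{\chi_k}$, and $Z_{\e_k}$ in the product.

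First I would set up the tracking. Start with the initial triangle $\D ABC \subset \bD$ in standard position \eqref{standpos} with $BC = \ell(c_i)$ placed in the canonical orientation of the lemma, and follow the path representative of $\c_i$ in $\t$ one elementary step at a time. At each step two things happen. When the path arrives at a vertex of $\t$ (equivalently, sits inside a given triangle of $\D$) and must exit through a different edge than the canonical one, the labels of the triangle have to be cyclically permuted $(A,B,C)\mapsto (B,C,A)$ or $(C,A,B)$; by the lemma defining $P_\h^\pm$, this is exactly implemented by the prime transformation, with the sign $\pm$ determined by whether the turn at that vertex of $\t$ is to the left or to the right, and with the odd parameter $\h_k$ equal to the $\mu$-invariant of that triangle. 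When the path then crosses the outgoing edge of $\t$ — i.e.\ the corresponding edge of $\D$ in the quadrilateral $\diamondsuit ABCD$ — the passage of the standard position from $\D ABC$ to $\D CDA$ is by the very definition of the upside-down $\Up^{\chi_k}=J\circ D_{\sqrt{\chi_k}}$, where the cross-ratio $\chi_k = ac/bd$ is built from the $\l$-lengths of the four boundary edges of the quadrilateral. Finally, the fixed spin structure $\w$ equips each edge of $\t$ with an orientation; the sign $\e_k\in\{\pm1\}$ records whether our traversal agrees with this orientation, and when it disagrees the multiplication by the fermionic reflection $Z_{-1}$ is precisely what is required to keep us in the $SL(2,\R)$-lift compatible with Natanzon's description referenced in the preceding discussion.

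Having identified each elementary move with one factor $Z_{\e_k}\circ \Up^{\chi_k}\circ P_{\h_k}^{\pm}$, the induction is straightforward: if after $k{-}1$ steps the partial composition carries the initial standard position to the standard position of the $k$-th triangle along the path, then applying the $k$-th elementary move advances it to the $(k{+}1)$-st. After exhausting the path, the composition brings $\D ABC$ to $\D A'B'C'$ with $\ell(c_i)=BC$ mapped onto $\ell(\c_i(c_i))=B'C'$. By the uniqueness of the element of $OSp(1|2)$ effecting this match (the standard-position lemma, modulo the $Z_{-1}$ ambiguity that is absorbed into the $Z_{\e_k}$ factors), this composition must equal $\what{\rho}(\c_i)$. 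The main obstacle is not a deep difficulty but careful convention bookkeeping: pinning down the order of composition (left- versus right-action on the standard triple), the left/right rule distinguishing $P_{\h_k}^{+}$ from $P_{\h_k}^{-}$, and the precise rule for $\e_k$ so that the fermionic reflections accumulate consistently with the orientation reversal rule of Figure \ref{flipgraphint}. Once these conventions are fixed, the inductive step reduces to the already-established properties of the three building blocks.
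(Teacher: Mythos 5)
Your proposal is correct and follows essentially the same route as the paper, which presents this Proposition as a direct consequence of the recursive construction of the lift (tracking the standard position along the path via prime transformations at turns, upside-down transformations at edge crossings, and fermionic reflections $Z_{-1}$ recording the spin-structure orientation, with uniqueness of the standard-position element identifying the composition with $\what{\rho}(\c_i)$). Your inductive bookkeeping simply makes explicit what the paper leaves to the construction in \cite{PZ,IPZ}, including the absorption of the $Z_{-1}$ ambiguity into the fixed orientation class $\w$.
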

%==========================================================
\section{$OSp(1|2)$ as fractional linear transform}\label{sec:frac}
In this section, we recall the representation of $OSp(1|2)$ as fractional linear transformation on the super upper half-plane.

Recall that in the bosonic case, $PSL(2,\R)$ acts transitively on the upper half-plane $\H^+:=\{x+iy|y>0\}\subset \C$ by
\Eq{z\mapsto \frac{az+b}{cz+d},}
where $z=x+iy$ and $\veca{a&b\\c&d}\in PSL(2,\R)$. 

In the super case, we have an analogue given as follows. Let $\C^{1|1}$ be the complex superplane, and consider the super upper half-plane $\what{\H}^+:=\{(z,\eta)|Im(z_\#)>0\}\subset \C^{1|1}$ where $z_\#$ denote the body of $z$. Then it is well-known that
\begin{Prop}\cite{CR, PZ, witten} An element $g=\veca{a&\a&b\\\c&f&\b\\c&\d&d}\in OSp(1|2)$ acts transitively on $\what{\H}^+$ by the superconformal transformations
\Eq{
z&\mapsto \frac{az+b}{cz+d}+\eta\frac{\c z+\d}{(cz+d)^2},\\
\eta&\mapsto \frac{\c z+\d}{cz+d}+\eta\frac{1+\half \d\c}{cz+d}.
}
\end{Prop}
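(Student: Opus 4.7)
The plan is to derive the action by projectivizing the linear representation of $SL(1|2)_0$ on $\C^{2|1}$ and restricting to $OSp(1|2)$. Represent a point $(z,\eta)\in\what{\H}^+$ by the column supervector with entries $z,\eta,1$; then $g\in OSp(1|2)$ sends it to the column with entries $az+\a\eta+b$, $\c z+f\eta+\b$, and $cz+\d\eta+d$, and normalizing the last entry to $1$ defines the candidate pair $(z',\eta')$. Since this prescription manifestly respects matrix multiplication, the real content of the proposition lies in deriving the explicit form of $(z',\eta')$ together with preservation of $\what{\H}^+$, superconformality, and transitivity.

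The first computational step is to invert the denominator using nilpotency of odd variables:
\Eqn{
(cz+d+\d\eta)^{-1} = \frac{1}{cz+d} - \frac{\d\eta}{(cz+d)^2},
}
since $(\d\eta)^2=0$. Expanding $z'$ and $\eta'$, discarding terms like $\eta^2$ or $\a\eta\d\eta$, and pulling $\eta$ to the front using $\a\eta=-\eta\a$ and $\d\eta=-\eta\d$, yields each of $z'$ and $\eta'$ as a body term plus a single term linear in $\eta$. The decisive step, which I expect to be the main obstacle, is to rewrite the raw $\eta$-coefficients into the compact form stated in the proposition. For this one exploits the defining relations of $OSp(1|2)$, obtained by computing $g^{st}Jg=J$ entry by entry: the off-diagonal odd entries yield
\Eqn{
f\c = c\a - a\d,\qquad f\b = \a d - b\d,
}
the $(2,2)$-entry gives $f^2=1+2\a\d$ (so $f=1+\a\d$ on the positive component), and the $(1,3)$-entry recovers the Berezinian condition $ad-bc+\c\b=1$. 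Substituting these identities collapses the raw coefficients to the advertised form, provided one is meticulous about signs when moving odd factors past each other and uses $\c^2=\d^2=0$ freely.

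The remaining claims are comparatively routine. Preservation of $\what{\H}^+$ follows from the bosonic truncation of the action, which is the classical $PSL(2,\R)$ action on $\H^+$; the odd correction to $z'$ has no effect on the imaginary part of its body. Transitivity is likewise inherited from the classical transitivity on $\H^+$, combined with odd generators in $OSp(1|2)$ that vary $\eta$, so that together they sweep out $\what{\H}^+$. Superconformality amounts to the identity $Dz'-\eta'\,D\eta'=0$ with $D=\partial_\eta+\eta\partial_z$, equivalently that the pull-back of $D_{\eta'}=\partial_{\eta'}+\eta'\partial_{z'}$ is a scalar multiple of $D$; this is a direct differentiation of the explicit formulas, and the verification reduces once more to the $OSp(1|2)$-relations used above.
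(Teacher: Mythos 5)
The paper does not actually prove this Proposition (it is quoted from the cited references), so your argument has to stand on its own computation, and the ``decisive step'' you deferred is precisely where it breaks. With your dictionary (represent $(z,\eta)$ by the column with entries $z,\eta,1$, apply $g$, normalize the last entry), the expansion you describe gives the $\eta$-coefficient of $z'$ as $\frac{(a\d-c\a)z+(b\d-d\a)}{(cz+d)^2}$ and the $\eta$-independent part of $\eta'$ as $\frac{\c z+\b}{cz+d}$. Your identities $f\c=c\a-a\d$ and $f\b=\a d-b\d$ (which are correct for $g^{st}Jg=J$) collapse these to $-\,\frac{f(\c z+\b)}{(cz+d)^2}$ and $\frac{\c z+\b}{cz+d}$, not to the advertised $\frac{\c z+\d}{(cz+d)^2}$ and $\frac{\c z+\d}{cz+d}$: the entry $\b$ cannot be traded for $\d$, and there is a residual factor $-f$. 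This is not reparable by more identities. Concretely, take $g=P_\h^+$, i.e.\ $a=-1,\a=\h,b=1,\c=-\h,f=1,\b=0,c=-1,\d=0,d=0$, which lies in $OSp(1|2)$. Your recipe gives $z'=\frac{z-1}{z}+\eta\frac{\h}{z}$, $\eta'=\h-\frac{\eta}{z}$, whereas the Proposition gives $z'=\frac{z-1}{z}-\eta\frac{\h}{z}$ with the same $\eta'$; and only the latter pair satisfies the superconformality condition $Dz'=\eta'\,D\eta'$ with $D=\partial_\eta+\eta\partial_z$ (for your pair $Dz'-\eta'D\eta'=\frac{2\h}{z}\neq 0$). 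So in the paper's convention (odd row and column in the middle, orthogonality with respect to this $J$), the naive projectivization is not even a superconformal map, hence no amount of substituting the $OSp(1|2)$ relations can bring it to the stated form.

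The missing content is therefore the correct correspondence between the paper's matrices and superconformal transformations: in this convention the action is not ``apply $g$ to $(z,\eta,1)$ and divide by the last entry,'' and pinning down the right identification (for instance by transporting the adjoint action on the light cone of Section 2 to the boundary, or by defining the action on generators such as $D_a$, $Z_{\pm1}$, $P^\pm_\h$ and the translations and verifying the homomorphism property) is the actual substance of the Proposition. A related warning that shows the dictionary is delicate: superconformality of the stated formula, read literally, forces $ad-bc=1$ among its labels, while the matrix entries of a general element satisfy $ad-bc=1-\c\b$ (e.g.\ for $P^+_{\h_1}P^+_{\h_2}$ one has $ad-bc=1+\h_1\h_2$); so one must not identify the formula's parameters with raw matrix entries without justification. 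Your closing remarks on preservation of $\what{\H}^+$, transitivity, and the superconformality check are fine in outline, but they all presuppose the explicit formula, so the proof is incomplete until this identification is established.
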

In particular, a translation of the form $z\mapsto z+b$ is given by an element of a Borel subgroup of the form $g=\veca{\pm1&0&b\\0&1&0\\0&0&\pm1}$ which belongs to the canonical  $SL(2,\R)$ subgroup of $OSp(1|2)$.

Return to our setting, recall from the introduction that we considered the conformal transformations \eqref{trans1}, \eqref{trans2} from the neighborhood around the punctures to the supertube. If we restrict to the neighborhood within $0<|z|<1$, then the image of the transformation becomes the left half-plane $\{(z,\eta)|Re(z_\#)<0\}\subset\C^{1|1}$ instead. Therefore rotating our setting by 90 degree, the equivalence relations given by the simple translation of the even variable \eqref{eq1}, \eqref{eq2} in the complex direction are then represented by the action of the lower Borel elements
\Eq{\veca{\pm1&0&0\\0&1&0\\b&0&\pm1}\in OSp(1|2).}

%==========================================================
\section{Monodromies and decorations}\label{sec:mono}
We are now ready to discuss the monodromies around punctures. The construction of the map $\what{\rho}:\pi_1(F)\to OSp(1|2)$ requires that the group elements $\what{\rho}(\c)$ corresponding to loops $\c$ going around punctures, which being projected to $PSL(2,\R)$ is parabolic, i.e. is conjugate to $\pm\veca{1&0\\b&1}$.

For a loop $\c\in \pi_1(F)$ around a puncture, by definition the monodromy $\what{\rho}$ should fix the lift of the puncture $A\in \cL_0$ which belongs to a sequence of triangles $\D AB_{i+1}B_i$ (see Figure \ref{punc}). Acting by $OSp(1|2)$ if necessary, let us choose the standard position such that the puncture is lifted to $A=r(0,1,0|0,0)$ for some $r>0$.
\begin{figure}[htb!]
\centering
\begin{tikzpicture}[baseline=(0), every node/.style={inner sep=0, minimum size=0.2cm, circle, fill=black }, x=0.7cm, y=0.7cm]
\node[red] (0) at (0,0){};
\node [red,fill=none] at (0,-0.5){$A$};
\node[gray] (1) at (0:4){};
\node[gray] (2) at (60:4){};
\node[gray] (3) at (120:4){};
\node[gray] (4) at (180:4){};
\node[gray] (5) at (240:4){};
\node[gray] (6) at (300:4){};
\node [gray, fill=none] at (0:4.5){$B_1$};
\node [gray, fill=none] at (60:4.5){$B_2$};
\node [gray, fill=none] at (120:4.5){$B_3$};
\node [gray, fill=none] at (180:4.5){$B_4$};
\node [gray, fill=none] at (240:4.5){$B_5$};
\node [gray, fill=none] at (300:4.5){$B_n$};
\node [gray, fill=none] at (0,-4){\huge $\cdots$};
\draw[gray] (1)--(2)--(3)--(4)--(5)--(6)--(1);
\draw[gray] (1)--(0)--(2);
\draw[gray] (3)--(0)--(4);
\draw[gray] (5)--(0)--(6);
\draw[red,->-, very thick] (30:2.3)--(90:2.3);
\draw[red,->-, very thick] (90:2.3)--(150:2.3);
\draw[red,->-, very thick] (150:2.3)--(210:2.3);
\draw[red,->-, very thick] (210:2.3)--(270:2.3);
\draw[red,->-, very thick] (270:2.3)--(330:2.3);
\draw[red,->-, very thick] (330:2.3)--(30:2.3);
\draw (30:2.3)--(30:4.3);
\draw (90:2.3)--(90:4.3);
\draw (150:2.3)--(150:4.3);
\draw (210:2.3)--(210:4.3);
\draw (270:2.3)--(270:4.3);
\draw (330:2.3)--(330:4.3);
\node [rotate=30, red, fill=none] at (300:2.3){\huge$\cdots$};
\node [red, fill=none] at (2.5,-0.5){$\c$};
\node [red, fill=none, below right] at (30:2.3){$\t_1$};
\node [red,fill=none, below] at (90:2.3){$\t_2$};
\node [red,fill=none, below right] at (150:2.3){$\t_3$};
\node [red,fill=none, below] at (210:2.3){$\t_4$};
\node [red,fill=none, below right] at (270:2.3){$\t_5$};
\node [red,fill=none, below] at (330:2.3){$\t_n$};
\end{tikzpicture}
\caption{The loop $\c$ in $\t$ around a puncture $A$ surrounded by $n$ triangles}
\label{punc}
\end{figure}
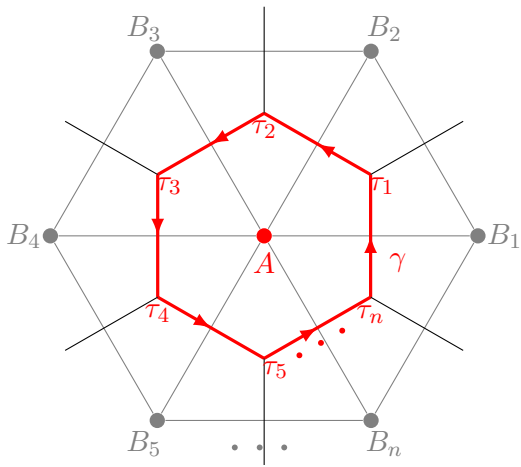

Then by definition of the adjoint action of $OSp(1|2)$ on $\cL_0$, a generic element $\what{\rho}(\c)$ of the monodromy is given by a lower triangular matrix
\Eq{
g=\veca{\pm1&0&0\\\h&1&0\\b&\phi&\pm1}\in OSp(1|2),\label{gg}}
and the conditions for it to be in $OSp(1|2)$ require that $\phi=\mp \h$, hence we only have one odd parameter $\h$.

Let us call the punctures corresponding to the monodromy whose projection to $SL(2,\R)$ has trace $2$ (resp. $-2$) as Ramond (resp. Neveu-Schwarz (NS)) punctures. 

In the introduction we pointed out that in order to obtain the punctured super Riemann surfaces with proper boundary conditions on Ramond and NS punctures, we need to impose the condition that the monodromy element around the puncture be conjugate to a simple translation of the even variable in the transformed space, in which $OSp(1|2)$ acts by fractional-linear transformation. We saw in the previous section that this corresponds to an element of the lower Borel subgroup of the $SL(2,\R)$ subgroup of $OSp(1|2)$.

Hence we now tighten things a bit and require also that the element $g$ in \eqref{gg} above be conjugate to 
\Eq{\veca{\pm1&0&0\\0&1&0\\b&0&\pm1}\in OSp(1|2)\label{standform}}
by elements of $OSp(1|2)$ which fix the puncture. In other words, 
\begin{Def}[Monodromy constraint]
For every lift of puncture $p\in \cL_0$, the $OSp(1|2)$-orbit of pairs $(p,g)\in \cL_0\x OSp(1|2)$ under the action given by
\Eq{
U\cdot (p,g) = (U\cdot p, UgU\inv),\tab U\in OSp(1|2),
}
is required to contain a point $(p,g_0)$ where $g_0$ is of the form \eqref{standform}.
\end{Def}
Let us see how these constraints affect each type of punctures.

\begin{Lem} In the case of NS puncture, the monodromy constraint is always satisfied.
\end{Lem}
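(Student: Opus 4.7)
\ The plan is to exhibit explicitly an element $U \in OSp(1|2)$ stabilizing $A$ such that $U g U^{-1}$ already takes the standard form \eqref{standform} with the NS sign; no further obstruction will appear.

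First I would parameterize the stabilizer of $A$ in $OSp(1|2)$. Imposing $U^{st} M_A U = M_A$ together with the defining relation $U^{st} J U = J$ shows that this stabilizer is itself a two--parameter subgroup of exactly the same triangular shape as the monodromies in \eqref{gg}; after a short elimination of the $OSp(1|2)$ equations one may write its elements as
\[
U = \begin{pmatrix} \mu & 0 & 0 \\ \gamma & 1 & 0 \\ c & -\mu\gamma & \mu \end{pmatrix}, \qquad \mu \in \{\pm 1\},\ c \in \R,\ \gamma\ \text{odd}.
\]
In particular $g$ itself lies in this subgroup, so both factors in the conjugation stay inside this explicitly handled subgroup.

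Next I would compute $U g U^{-1}$ directly with the multiplication law within this subgroup. For the NS monodromy $g = \begin{pmatrix} -1 & 0 & 0 \\ \eta & 1 & 0 \\ b & \eta & -1 \end{pmatrix}$, a short Grassmann computation (all terms involving $\eta^2$, $\gamma^2$, or $\gamma b \gamma$ vanish) gives
\[
U g U^{-1} = \begin{pmatrix} -1 & 0 & 0 \\ \mu\eta - 2\mu\gamma & 1 & 0 \\ b & \mu\eta - 2\mu\gamma & -1 \end{pmatrix},
\]
so that the even translation parameter $b$ is preserved and only the odd entries are affected. Setting $\gamma = \eta/2$ (a legitimate odd parameter, since $2$ is invertible as a real scalar) makes the odd entries vanish, yielding exactly \eqref{standform} with the $-1$ sign.

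The computation is entirely routine; there is no genuine obstacle. The one conceptual point worth highlighting is why the coefficient $-2\mu$ in front of $\gamma$ is nonzero: it is a direct consequence of the $-1$ diagonal signs of the NS monodromy, which make the contributions coming from the upper--left and lower--right blocks add constructively under conjugation. In the Ramond case both signs become $+1$ and the analogous coefficient will be seen to vanish, so $\eta$ cannot be gauged away and a genuine constraint on the odd coordinate emerges --- exactly the phenomenon that Section \ref{sec:dim} will exploit.
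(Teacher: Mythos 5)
Your argument is correct and is essentially the paper's own proof: the paper simply writes down the conjugator $U=\veca{1&0&0\\\h/2&1&0\\0&-\h/2&1}$, which is exactly your stabilizer element with $\mu=1$, $c=0$, $\gamma=\eta/2$, and checks that conjugation yields the form \eqref{standform}. Your extra remarks (systematically parameterizing the lower-triangular stabilizer first, and noting that the coefficient of $\gamma$ vanishes when the diagonal signs are $+1$) are consistent with the paper's subsequent Ramond lemma but do not change the route.
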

\begin{proof} Take $U:=\veca{1&0&0\\\h/2&1&0\\0&-\h/2&1}$, then it fixes $p=A$. We have  $$U\inv=\veca{1&0&0\\-\h/2&1&0\\0&\h/2&1}$$ and
$$U\cdot\veca{-1&0&0\\\h&1&0\\b&-\h&-1}\cdot U\inv =\veca{-1&0&0\\0&1&0\\b&0&-1}$$
as required.
\end{proof}
However, for Ramond puncture, the situation is different.
\begin{Lem}
The monodromy constraint is satisfied if and only if $\h=0$.
\end{Lem}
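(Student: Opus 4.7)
The plan is to parametrize the $OSp(1|2)$-stabilizer of $A=r(0,1,0|0,0)$ and perform the conjugation explicitly, in direct parallel with the preceding lemma, and then observe what obstructs the same trick in the Ramond case. Imposing the $OSp(1|2)$ relations together with the fixing condition $U\cdot A=A$ on a general ansatz yields that the identity component of the stabilizer is the two-parameter family
\[
U(c,\gamma)=\begin{pmatrix}1&0&0\\\gamma&1&0\\c&-\gamma&1\end{pmatrix},\qquad c\text{ even},\ \gamma\text{ odd},
\]
while the other component is obtained by composing with $Z_{-1}$. In particular, the matrix used in the previous lemma is $U(0,\eta/2)$.

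Next I would compute the conjugate $U(c,\gamma)\,g\,U(c,\gamma)^{-1}$ for the Ramond monodromy $g$. Writing $U=I+X$, a quick check shows $X^2=0$, so $U^{-1}=I-X$ exactly and therefore $UgU^{-1}=g+[X,g]$. Evaluating the commutator produces
\[
U(c,\gamma)\,g\,U(c,\gamma)^{-1}=\begin{pmatrix}1&0&0\\\eta&1&0\\b-2\gamma\eta&-\eta&1\end{pmatrix}.
\]
The crucial observation is that the odd entries $\pm\eta$ at positions $(2,1)$ and $(3,2)$ are left untouched, and only the even entry $b$ is shifted; a further conjugation by $Z_{-1}$ merely flips the sign of $\eta$ in those slots. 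Since the standard form \eqref{standform} requires zeros at $(2,1)$ and $(3,2)$, this forces $\eta=0$, while the converse $\eta=0\Rightarrow g$ already equals \eqref{standform} is immediate.

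The conceptual reason Ramond behaves differently from NS is visible in the commutator $[X,g]$: the contributions to the odd off-diagonal slots come weighted by the factor $1-(\text{diagonal entry of }g)$, which vanishes for the $+1$ diagonal (Ramond) and equals $2$ for the $-1$ diagonal (NS). Only in the NS case is there enough freedom in the stabilizer to cancel $\eta$. The only mildly technical ingredient is the stabilizer computation itself, a routine piece of $OSp(1|2)$ bookkeeping; I do not expect any serious obstacle beyond that.
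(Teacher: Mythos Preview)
Your argument is correct and follows essentially the same route as the paper: both proofs compute the effect of conjugating the Ramond monodromy by an element of the $OSp(1|2)$--stabilizer of $A$ and observe that the odd off-diagonal entry $\eta$ cannot be removed. The only cosmetic difference is that the paper writes the conjugation relation for a general $U\in OSp(1|2)$ and then invokes lower-triangularity of the stabilizer to force $\beta=0$ (whence $f\eta=0$), whereas you first parametrize the stabilizer explicitly as $U(c,\gamma)$ and read off the result from the nilpotent/commutator identity; your closing remark explaining the NS/Ramond dichotomy via the factor $1-(\pm 1)$ is a nice touch not present in the paper.
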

\begin{proof}
The conjugation by a general $U$ can be rewritten as
$$\veca{a&\a&b\\\c&f&\b\\c&\d&d}\veca{1&0&0\\\h&1&0\\B&-\h&1}=\veca{1&0&0\\0&1&0\\B'&0&1}\veca{a&\a&b\\\c&f&\b\\c&\d&d}$$
which leads to the constraints
\Eqn{
Bb=\a\h, \tab b\h=\b\h=0, \tab f\h+B\b=0, \tab B'a+\h\d=Bd, \tab B'\a+d\h=0.}
Since we also need to fix the vector corresponding to the puncture $p=A$, the conjugation requires $U$ to be lower triangular. In particular $\b=0$, hence $f\h=0\=> \h=0$ since $f>0$.
\end{proof}
Therefore to get a ``true" super Teichm\"uller space, we would like to impose the conditions $\h=0$ to our group elements corresponding to monodromies around Ramond punctures.
%==========================================================
\section{Dimension reduction}\label{sec:dim}
In this section, we show that the conditions that $\h=0$ for monodromies around Ramond punctures will impose a linear constraint in terms of the odd coordinates of $S\til{\cT}(F)$. In particular, this will reduce the dimension of $S\til{\cT}(F)$, and hence $S\cT(F)$, making the overall odd dimension $4g-4+2n_{NS}+n_R$, where $n_{NS}$ and $n_R$ are the number of NS and Ramond punctures respectively, as explained in the introduction.

Fix a fatgraph orientation $\w$ on the fatgraph spine $\t$ corresponding to a spin structure. Consider a loop homotopic to $\c=(\t_1,\t_2,...,\t_n,\t_1)\in \pi_1(F)$ on $\t$ passing through the vertices $\t_i\in\t$ associated to the triangles around a Ramond puncture, and we assume $\c$ is going in a counter-clockwise direction (cf. Figure \ref{punc}). According to our construction, the monodromy is given by
\Eq{
\what{\rho}(\c)=\prod_{k=1}^n Z_{\e_k}\circ \Up^{\chi_k}\circ P_{\h_k}^+\in OSp(1|2),}
where the product is read from right to left, $\h_i$ is the odd parameter for the triangle $\t_i$, $\chi_i$ is the cross ratio between $\t_i$ and $\t_{i+1}$, and $\e_i=-1$ if the path $\c$ at $\t_i\to \t_{i+1}$ has the same orientation as $\w$ or else $\e_i=+1$ otherwise.
\begin{Prop} $\what{\rho}(\c)\in OSp(1|2)$ is of the form
\Eq{
\what{\rho}=\veca{*_0&0&0\\-*_1&1&0\\-*_3&*_2&*_0},}
where $c_k:=-\e_k\sqrt{\chi_k}$, and
\Eq{
*_0&:=\prod_{k=1}^nc_k=(-1)^n\prod_{k=1}^n \e_k,\\
*_1&:=\sum_{k=1}^{n} \h_k\prod_{j=1}^{k-1}c_j\inv,\\
*_2&:=\sum_{k=1}^n \h_k \prod_{j=k}^n c_j = (*_0)(*_1),\\
*_3&:=\sum_{j=1}^n\left(\prod_{k=1}^{j-1}c_k\inv\prod_{k=j}^nc_k\right)+\sum_{1\leq i<j\leq n} \left(\h_i\h_j \prod_{k=1}^{j-1}c_k\inv\prod_{k=j}^n c_k\right).}
\end{Prop}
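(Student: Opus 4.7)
The plan is a direct induction on the number $n$ of triangles surrounding the puncture, exploiting the lower-triangular structure of each elementary factor. As a first step, I would compute
$$B_k := Z_{\e_k}\circ \Up^{\chi_k}\circ P_{\h_k}^+$$
in closed form by simply multiplying three $3\times 3$ supermatrices using the definitions of $Z$, $\Up$ and $P^+$ recorded in Section \ref{sec:coord}. The resulting matrix is
$$B_k = \veca{c_k\inv & 0 & 0 \\ -\h_k & 1 & 0 \\ -c_k & c_k\h_k & c_k}, \qquad c_k = -\e_k\sqrt{\chi_k},$$
which is (block) lower triangular with diagonal entries $c_k\inv$, $1$, $c_k$. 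This structural observation is what makes the rest of the computation tractable.

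Since the product of lower-triangular matrices is lower triangular with multiplying diagonal entries, it is immediate that $\what{\rho}(\c)$ has the claimed vanishing upper-triangular entries and that the $(1,1)$ and $(3,3)$ positions equal $\prod_k c_k\inv$ and $\prod_k c_k$, respectively. Because $\c$ loops around a puncture, the monodromy is parabolic (see Section \ref{sec:mono}), so these two entries must coincide; this forces $\prod_k\chi_k=1$, and with the positive square-root convention their common value is $(-1)^n\prod_k\e_k$, i.e.\ the stated $*_0$.

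Next, I would compute the $(2,1)$ and $(3,2)$ entries by induction on $n$. Setting $P_k:=B_kB_{k-1}\cdots B_1$, the recursion $P_{k+1}=B_{k+1}P_k$ yields first-order linear recursions in these two positions that telescope into single sums, giving $-*_1$ and $*_2$ respectively. The identity $*_2=*_0\cdot *_1$ then follows by the substitution $\prod_{k=j}^n c_k = *_0\prod_{k=1}^{j-1}c_k\inv$ inside $*_2$ term by term, or equivalently from the supergroup relation $g^{st}Jg=J$.

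The main obstacle is the $(3,1)$ entry, whose recursion is second order in the odd variables: it couples to the $(2,1)$ entry through the cross term $c_{k+1}\h_{k+1}\beta_k$, where $\beta_k$ is the running $(2,1)$ entry of $P_k$. I would split this position into an even-even piece and an odd-odd piece and unroll each recursion separately by telescoping. The delicate bookkeeping is the sign generated by the anticommutation $\h_{k+1}\h_i = -\h_i\h_{k+1}$: each time the incoming $\h_{k+1}$ is pushed past an earlier $\h_i$ sitting inside $\beta_k$, a sign appears, and it is precisely this effect that turns a single sum over the outer index into the double sum $\sum_{i<j}\h_i\h_j(\cdots)$ in $*_3$. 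As a final consistency check, verifying $g^{st}Jg=J$ on the resulting matrix reproduces the relation $*_2=*_0\cdot *_1$ independently and constrains the $(3,1)$ entry in a way compatible with the claimed formula.
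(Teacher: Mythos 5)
Your proposal is correct and is essentially the paper's own argument: the paper likewise computes $Z_{\e_k}\circ \Up^{\chi_k}\circ P_{\h_k}^{+}$ to be exactly the lower-triangular matrix you call $B_k$ and then obtains $\what{\rho}(\c)$ by induction on the factors, using $\prod_{k=1}^{n}\chi_k=1$. The only difference is cosmetic: you deduce $\prod_{k}\chi_k=1$ from parabolicity of the puncture monodromy, whereas the paper invokes it directly as a known identity of the cross-ratios around a puncture; both are available in this setting.
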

\begin{proof}Writing out the matrices explicitly, we see that
$$Z_{\e_k}\circ \Up^\chi\circ P_{\h_k}^+=\veca{-\e_k\sqrt{\chi_k}\inv&0&0\\-\h_k&1&0\\\e_k\sqrt{\chi_k}&-\h_k \e_k\sqrt{\chi_k}&-\e_k\sqrt{\chi_k}},$$
which is lower triangular. The product then follows easily by induction, and using the fact that $\prod_{k=1}^n\chi_k=1$.
\end{proof}
\begin{Rem}In particular, we see that the puncture is Ramond if $*_0>0$, i.e. the length $n$ of the loop $\c$, and the number of segments of $\c$ matching orientations with $\w$, have the same parity.
\end{Rem}
Therefore, the monodromy constraints for the Ramond punctures amount to $*_1=0$, and we arrive at the main result of the paper:
\begin{Thm}
The monodromy constraints for the Ramond punctures are given by the following linear equation of $\h_i$ around each Ramond puncture of the surface:
\Eq{
&\sum_{i=1}^{n} \h_i\prod_{j=1}^{i-1}c_i\inv=0\nonumber\\
\iff & \h_1-\h_2\frac{\e_1}{\sqrt{\chi_1}}+\h_3\frac{\e_1\e_2}{\sqrt{\chi_1\chi_2}}-.... +(-1)^{n-1}\h_n\frac{\e_1...\e_{n-1}}{\sqrt{\chi_1...\chi_{n-1}}}= 0,
}
or upon multiplication by $*_0$:
\Eq{
&\sum_{i=1}^{n} \h_i\prod_{j=i}^{n}c_i=0\nonumber\\
\iff & \h_1\sqrt{\chi_1...\chi_n}\e_1...\e_n - \h_2\sqrt{\chi_2...\chi_n}\e_2...\e_n +... +(-1)^{n-1}\h_n\sqrt{\chi_n}\e_n=0.
}
\end{Thm}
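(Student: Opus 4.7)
The plan is to combine the Lemma characterizing the Ramond monodromy constraint with the explicit matrix form of $\what{\rho}(\c)$ already obtained in the Proposition. From the Lemma, writing $g=\what{\rho}(\c)$ in the standard position where the lifted puncture is $A=r(0,1,0|0,0)$, the Ramond monodromy constraint says exactly that the odd parameter sitting in the $(2,1)$-entry of $g$ (called $\h$ in the Lemma) must vanish. The Proposition identifies this very $(2,1)$-entry as $-*_1$. Hence the constraint collapses to the single scalar equation $*_1=0$.

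To pass from $*_1=0$ to the first displayed form of the theorem, I would simply substitute $c_k=-\e_k\sqrt{\chi_k}$ into
$$*_1=\sum_{k=1}^n \h_k\prod_{j=1}^{k-1}c_j^{-1}.$$
Since $c_j^{-1}=-\e_j/\sqrt{\chi_j}$, one has
$$\prod_{j=1}^{k-1}c_j^{-1}=(-1)^{k-1}\frac{\e_1\cdots\e_{k-1}}{\sqrt{\chi_1\cdots\chi_{k-1}}},$$
which yields the alternating sum $\h_1-\h_2\e_1/\sqrt{\chi_1}+\cdots$ appearing in the statement.

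For the second, equivalent form, I would use the identity $*_2=(*_0)(*_1)$ already recorded in the Proposition. Multiplying $*_1=0$ through by $*_0=\prod_k c_k$, which is nonzero (indeed equal to $+1$ for a Ramond puncture by the Remark), gives $*_2=0$; expanding
$$*_2=\sum_{k=1}^n \h_k\prod_{j=k}^n c_j$$
via $c_j=-\e_j\sqrt{\chi_j}$ then produces the second alternating sum.

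The substantive work, namely the inductive evaluation of the matrix product yielding the explicit entries $*_0,*_1,*_2,*_3$, is already accomplished in the Proposition; the theorem is then a direct unpacking of $\h=0$ into the coordinates $(\h_i,\chi_i,\e_i)$. The only delicate point is the sign and orientation bookkeeping in the change of variables $c_k=-\e_k\sqrt{\chi_k}$, combined with the identification of the Lemma's forbidden matrix entry $\h$ with the Proposition's $-*_1$. Beyond this, no genuine obstacle is expected.
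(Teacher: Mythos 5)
Your proposal is correct and follows essentially the same route as the paper: the paper likewise reads off the Ramond constraint $\h=0$ from the Lemma, identifies the $(2,1)$-entry of the Proposition's lower-triangular monodromy matrix as $-*_1$, and obtains the two displayed forms by substituting $c_k=-\e_k\sqrt{\chi_k}$ and multiplying by $*_0$ (using $*_2=(*_0)(*_1)$). No gaps to report.
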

These constraints will remove the necessary $(0|n_{R})$-dimensional bundle from $S\cT(F)$, which we will call the \emph{Ramond decoration}.

\end{document}